\newtheorem{theorem}{Theorem}[section]
\newtheorem{proposition}[theorem]{Proposition}
\theoremstyle{definition}
\newtheorem{definition}[theorem]{Definition}
\newtheorem{question}[theorem]{Question}
\theoremstyle{plain}
\newtheorem{thmx}{Theorem}
\newcommand{\NN}{\mathbb{N}}
\newcommand{\ZZ}{\mathbb{Z}}
\newcommand{\FF}{\mathcal{F}}
\newcommand{\define}[1]{\textit{#1}}
\newcommand{\Addresses}{{
    \footnotesize
    \begin{samepage}
        \noindent S.~Barbieri\\
        \textsc{Departamento de Matem\'{a}tica y ciencia de la computaci\'{o}n, Universidad de Santiago de Chile, Santiago, Chile.}\\
        \indent\emph{E-mail address}: \texttt{\href{mailto:sebastian.barbieri@usach.cl}{sebastian.barbieri@usach.cl}}
    \end{samepage}
		
	\medskip

    \begin{samepage}
        \noindent L.~Poirier\\
        \textsc{Institut de Math\'ematiques de Marseille (I2M), Aix-Marseille Université, Marseille, France}\\
        \indent\emph{E-mail address}: \texttt{\href{mailto:leo.poirier@univ-amu.fr}{leo.poirier@univ-amu.fr}}
    \end{samepage}
}}
\title{A remark on inverse limits of effective subshifts}
\author{Sebasti\'an Barbieri and Leo Poirier}
\date{\today}
\begin{document}

\maketitle

\begin{abstract}
    We show that, for every finitely generated group with decidable word problem and undecidable domino problem, there exists a sequence of effective subshifts whose inverse limit is not the topological factor of any effective dynamical system. This follows from considerations on the universality under topological factors for this class of dynamical systems. 
		\medskip
		
		\noindent
		\emph{Keywords:} symbolic dynamics, effective dynamical systems, universality, topological factors.
		
		\smallskip
		
		\noindent
		\emph{MSC2020:} \textit{Primary:}
        37B10, 
		\textit{Secondary:} 20F10. 


\end{abstract}

\section{Introduction}

Given a finitely generated group $G$ and a compact metrizable space $X$, one can consider the space of all actions of $G$ on $X$ by homeomorphisms. This space can be quite large, and a natural restriction is to consider only actions which are \textit{effective}, in the sense that both the space and the dynamics can be described by an algorithm. 

Effective actions can be formally defined using computable analysis. We say that an action of a finitely generated group $G\curvearrowright X$ is an \define{effective dynamical system} (EDS), if it is topologically conjugate to an action of $G$ on a recursively compact subset of a computable metric space, in such a way that every generator of $G$ acts through computable maps. This class of actions is intimately related to other natural classes of group actions, such as subshifts of finite type, see~\cite{Hochman2009b_simulation,BarCarRoj2025}. The understanding of this relationship has led to interesting results, such as the classification of the numbers that arise as topological entropies of subshifts of finite type~\cite{HochmanMeyerovitch2010} in $\ZZ^2$ and the existence of chaotic behavior at zero-temperature for locally constant potentials~\cite{Chazottes2010}.

While the class of EDS satisfies several interesting dynamical properties, in general it is not closed under topological factor maps (see the examples in~\cite[Propositions 8.1 and 8.2]{BarCarRoj2025}). However, the class of topological factors of an EDS does satisfy a weak computability constraint: it must be topologically conjugate to the inverse limit of a (non-necessarily uniform) sequence of effective subshifts~\cite[Corollary 8.8]{BarCarRoj2025}. This led the authors of~\cite{BarCarRoj2025} to define the class of \define{weakly effective dynamical systems} (wEDS) as that of the groups actions $G\curvearrowright X$ which are topologically conjugate to the inverse limit of a sequence of EDS, or equivalently (see~\Cref{prop:wEDS_effective_subshift}), to an inverse limit of effective subshifts. The class of wEDS is stable under topological factor maps, and every EDS is a wEDS.

The main motivation behind the name wEDS in~\cite{BarCarRoj2025} was an initial belief that they might characterize the class of topological factors of EDS (\cite[Question 8.12]{BarCarRoj2025}). Our main result is that in fact there exists wEDS which are not the topological factor of any EDS in a large class of groups.

\begin{thmx}\label{mainthm:nonexistence_factor_to_weds}
    For any finitely generated group with decidable word problem and undecidable domino problem, there exists a wEDS which is not the topological factor of any EDS.
\end{thmx}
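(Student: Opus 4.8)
The plan is to exhibit a concrete wEDS, built as a deliberately \emph{non-uniform} inverse limit of effective subshifts, into which the domino problem of $G$ is encoded so rigidly that no EDS can factor onto it. Decidability of the word problem is used only to make the bookkeeping effective: one fixes computable enumerations of all finite pattern sets $(\FF_n)_{n\ge 1}$ over $\{\mathtt 0,\mathtt 1\}$, writing $X_n:=X_{\FF_n}$ for the associated $G$-SFT, of all effective $G$-subshifts (by machine indices for their forbidden-pattern enumerations), and of all \emph{candidate} EDS $(\mathcal X_k)_{k\ge 1}$. By compactness the set $\{n:X_n=\varnothing\}$ is recursively enumerable, and ``undecidable domino problem'' says precisely that it is not recursive, so that $\mathcal S:=\{n:X_n\neq\varnothing\}$ is a co-recursively-enumerable, non-recursive set; it is harmless, and convenient for the final step, to pass to a set strictly above it in the Turing degrees, for instance its jump $\mathcal S'$.

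The first step is the construction. I would build effective $G$-subshifts $Y_0\xleftarrow{\ \phi_0\ }Y_1\xleftarrow{\ \phi_1\ }Y_2\xleftarrow{\ \phi_2\ }\cdots$ so that the inverse limit $Y:=\varprojlim_n(Y_n,\phi_n)$ is a wEDS whose topological conjugacy class determines the whole sequence $(\mathbbm 1_{\mathcal S'}(n))_n$. The mechanism: each $Y_n$ reserves an independent ``gadget'' inside its alphabet in which a chosen sequence of topological conjugacy invariants takes a value $v_n$ that codes the bit $\mathbbm 1_{\mathcal S'}(n)$ --- concretely one can use the Cantor--Bendixson structure of the set of $G$-fixed points, or, when $G$ is residually finite, the numbers of periodic points along a fixed exhaustion by finite-index subgroups --- while $\phi_n$ erases the level-$n$ gadget and fixes the earlier ones. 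The point that makes this a wEDS is that \emph{whatever} the value of $v_n$, the subshift $Y_n$ is an effective subshift on its own; hence the sequence $(Y_n,\phi_n)_n$, although it is only as computable as $\mathcal S'$ and so far from recursive, is a perfectly legitimate (non-uniform) defining sequence for a wEDS. Arranging the gadgets to be independent and to survive passage to the limit, one reads $(v_n)_n$, hence $\mathcal S'$, off the conjugacy class of $Y$.

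The second step shows that $Y$ is the topological factor of no EDS. Suppose $Y=\pi(\mathcal X)$ for an EDS $\mathcal X$ and a continuous, $G$-equivariant surjection $\pi$. Here one uses the proof of Corollary~8.8 of \cite{BarCarRoj2025} quantitatively: a factor of an EDS $\mathcal X$ is conjugate to an inverse limit of effective subshifts whose defining sequence --- although, as in that proof, it need not be recursive --- is computable from a finite description of $\mathcal X$ together with an oracle for the domino problem of $G$. Indeed, the approximating subshifts arise as quotients of $\mathcal X$ by clopen partitions, and for recursively compact $\mathcal X$ the quotient $G$-subshift attached to a clopen partition has semi-decidable forbidden patterns, uniformly in $\mathcal X$ and the partition; at each scale there are, up to refinement, only finitely many such quotients, and the only obstruction to assembling the inverse system effectively is deciding emptiness of the auxiliary effective subshifts that appear --- which is exactly what a domino-problem oracle provides. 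Consequently the sequence of conjugacy invariants $(v_n(Y))_n$ is computable from a finite datum and a domino-problem oracle, hence lies Turing-below the domino problem of $G$; but by the construction it codes $\mathcal S'$, which is strictly above it --- a contradiction. (Reformulated, the same computation shows that no EDS is universal under topological factors for the class of wEDS, which is the ``universality'' phrasing alluded to in the abstract.)

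The main obstacle is precisely this quantitative reading of Corollary~8.8. Since $\pi$ is only continuous, one has to isolate carefully which data about $Y$ is effectively recoverable from $\mathcal X$ alone, and verify that the sole obstruction to recovering it is a domino-problem question on $G$ rather than something of strictly higher complexity; then one must check that this complexity sits below that of the set encoded in the first step, so that the invariant sequence of a factor of an EDS genuinely cannot coincide with $(v_n(Y))_n$. Balancing the two sides --- keeping each $Y_n$ an honest effective subshift, the $\phi_n$ surjective, the inverse limit non-degenerate, and the chosen invariants invariant in exactly the direction the argument uses --- is where the real work lies; the remainder is the bookkeeping that decidability of the word problem makes routine.
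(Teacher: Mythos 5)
Your approach is genuinely different from the paper's, and its key step fails. The paper derives the theorem from two universality statements: every finitely generated group admits a universal wEDS (\Cref{mainthm:universalweds}), while a group with decidable word problem admits a universal EDS only if its domino problem is decidable (\Cref{mainthm:hochmanv2}); an EDS extension of the universal wEDS would automatically be a universal EDS, a contradiction. You instead try to exhibit a single wEDS whose conjugacy invariants are too Turing-complex to occur among factors of EDS, and the fatal gap is your second step, the ``quantitative reading of Corollary~8.8.'' It is not true that the conjugacy invariants of a topological factor of an EDS are computable from a finite description of the extension together with a domino-problem oracle. The non-uniformity in that corollary does not come from emptiness checks (which a domino oracle could indeed settle, as in the proof of \Cref{prop:decid_DP}); it comes from the choice of the generating sequence of clopen partitions --- equivalently, of the factor map itself --- which is an arbitrary element of Baire space and is not determined by $\mathcal X$. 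A single EDS typically has continuum many pairwise non-conjugate factors, so no fixed oracle can compute invariants for all of them. Concretely, for $G=\ZZ$ the universal odometer $\varprojlim \ZZ/n!\,\ZZ$ is an EDS, and every odometer is a topological factor of it; odometers are classified up to conjugacy by supernatural numbers, so the conjugacy class of a factor of this one EDS already encodes an arbitrary set of primes, hence sets of every Turing degree. Thus ``the invariant sequence of $Y$ lies strictly Turing-above the domino problem'' is not an obstruction to being a factor of an EDS, and the intended contradiction evaporates.

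Even setting that aside, your first step is only sketched: you would need conjugacy invariants of the inverse limit itself (not of the chosen levels $Y_n$, which are presentation-dependent) that survive to the limit and are readable from any presentation, and the invariants you suggest (Cantor--Bendixson structure of fixed points, periodic-point counts along finite-index subgroups) presuppose structure on $G$, such as residual finiteness, that a general finitely generated group with decidable word problem and undecidable domino problem need not have. The paper's universality argument sidesteps both difficulties: the obstruction it exploits is not the complexity of any single invariant of a single system, but the impossibility of one EDS factoring onto \emph{all} EDS at once.
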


The domino problem of a finitely generated group $G$ is undecidable if there is no algorithm which, given as input a finite set of forbidden patterns, can decide whether the subshift of finite type induced by those patterns is nonempty. This notion is based on the classical result by Berger~\cite{berger_undecidability_1966} that states, in modern terms, that the domino problem is undecidable for $G=\ZZ^2$. We refer the reader to~\cite{ABJ2018} for a survey on the domino problem for groups, and note that it has been conjectured that the only groups for which the domino problem is decidable are the virtually free groups~\cite{ballier_domino_2018}.

The proof of~\Cref{mainthm:nonexistence_factor_to_weds} requires two ingredients which, from our point of view, are interesting results by themselves. These results deal with the notion of universality for topological factors. Let us fix a group $G$. We say that an EDS $G\curvearrowright \mathfrak{U}$ is \define{universal} if every other EDS of the group $G$ can be obtained as a topological factor of $G\curvearrowright \mathfrak{U}$. Similarly, a wEDS of $G$ is called \define{universal} is every other wEDS of $G$ is a topological factor of it.

The first a ingredient is a characterization, initially proved for $\ZZ^d$-actions by Hochman~\cite{Hochman2009}, of the groups with decidable word problem which admit a universal EDS.

\begin{thmx}\label{mainthm:hochmanv2}
    Let $G$ be a finitely generated group with decidable word problem. There exists a universal EDS $G\curvearrowright \mathfrak{U}$ if and only if the domino problem of $G$ is decidable.
\end{thmx}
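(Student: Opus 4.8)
The statement is an equivalence, and I would prove the two implications separately, adapting Hochman's argument for $\ZZ^d$; the decidability of the word problem is used throughout to enumerate and compare elements of $G$ and to treat the relevant shift spaces as computable metric spaces. For the direction from a decidable domino problem to a universal EDS, I would build $G\act\mathfrak U$ by hand. Fix the Cantor set $\mathcal A=\{0,1\}^{\NN}$ as a universal alphabet and let $(\mathcal X_n)_n$ be a uniformly effective enumeration of all effective $G$-subshifts $\mathcal X_n\subseteq\mathcal A^G$ (each presented by a machine enumerating forbidden cylinders). The crux is a \emph{repair} step producing a uniformly effective family $(Y_n)_n$ of \emph{nonempty} subshifts with $Y_n=\mathcal X_n$ whenever $\mathcal X_n\neq\varnothing$: enumerate the forbidden cylinders of $\mathcal X_n$ and keep a new one only if, together with those kept so far, it still cuts out a nonempty finite-type subshift --- this test is exactly a call to an algorithm for the domino problem. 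By compactness of $\mathcal A^G$ each $Y_n$ is nonempty, and no cylinder is discarded when $\mathcal X_n\neq\varnothing$, since the partial constraint set is then contained in the full one. Put $\mathfrak U:=\prod_n Y_n$, a subshift over the alphabet $\prod_n\mathcal A$. Uniform effectivity of $(Y_n)_n$ makes $\mathfrak U$ an effectively closed, hence recursively compact, shift-invariant set carrying a computable $G$-action, i.e.\ an EDS; and since each $Y_n$ is nonempty the coordinate projections $\mathfrak U\twoheadrightarrow Y_n$ are genuine factor maps. Finally, any nonempty EDS $G\act X$ is a topological factor of one of the $\mathcal X_m$: for a computable surjection $q\colon\{0,1\}^{\NN}\twoheadrightarrow X$ (available since $X$ is recursively compact) the set $\widetilde X:=\{\,y\in\mathcal A^G : q(y_g)=g^{-1}\!\cdot q(y_e)\ \text{for all }g\in G\,\}$ is a shift-invariant effective subshift factoring onto $X$ via $y\mapsto q(y_e)$; since $\widetilde X$ surjects onto $X$ it is nonempty, so $\widetilde X=\mathcal X_m$ for some $m$ with $\mathcal X_m\neq\varnothing$, whence $\mathfrak U\twoheadrightarrow Y_m=\mathcal X_m=\widetilde X\twoheadrightarrow X$. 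Thus $\mathfrak U$ is universal.

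For the converse, given a universal EDS I would show that emptiness of subshifts of finite type is decidable. One direction is automatic: emptiness of the SFT $X_F$ cut out by a finite pattern set $F$ is semi-decidable, since by compactness of $A^G$ one has $X_F=\varnothing$ iff some finite window of $G$ carries no pattern avoiding the translates of $F$ that fit in it, and such a window can be searched for. So it suffices to semi-decide $X_F\neq\varnothing$. First replace the given universal EDS by an effective-subshift extension $\widetilde{\mathfrak U}$ of it, obtained by the same device as above (a computable surjection from $\{0,1\}^{\NN}$): it is an effective $G$-subshift over a compact alphabet and still factors onto everything, hence is still universal. The key point is then that $X_F\neq\varnothing$ if and only if $X_F$ is a topological factor of $\widetilde{\mathfrak U}$, if and only if there is a sliding block code $\pi\colon\widetilde{\mathfrak U}\to A^G$ whose image avoids $F$ (using universality together with $\widetilde{\mathfrak U}\neq\varnothing$). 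By Curtis--Hedlund--Lyndon one can enumerate all candidate block codes as finite combinatorial data; for a fixed $\pi$ the condition ``$\pi(\widetilde{\mathfrak U})$ avoids $F$'' is the \emph{finite} conjunction over $p\in F$ of the assertions $\pi^{-1}([p])\cap\widetilde{\mathfrak U}=\varnothing$, and each of these is semi-decidable because $\pi^{-1}([p])$ is an explicitly computable clopen --- hence compact --- subset of the full shift while $\widetilde{\mathfrak U}$ is effectively closed, so one just looks for a finite subcover of $\pi^{-1}([p])$ by basic open sets drawn from an enumeration of the complement of $\widetilde{\mathfrak U}$. Dovetailing over all $\pi$ semi-decides $X_F\neq\varnothing$, so the domino problem is decidable.

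I expect the repair step in the first implication to require the most care, as that is where all the hypotheses interlock: a genuinely uniform enumeration of effective subshifts, an emptiness test for finite SFTs (which is precisely an oracle for the domino problem), and a compactness argument ensuring at once that each $Y_n$ is nonempty and that it still equals $\mathcal X_n$ when the latter is nonempty. In the converse, the corresponding delicate point --- and the structural reason the equivalence involves the domino problem rather than emptiness of arbitrary effective subshifts --- is that finiteness of $F$ is essential: ``avoids $F$'' must be a \emph{finite} conjunction of clopen conditions, whereas emptiness of effective subshifts stays undecidable no matter what the domino problem does. The remaining tasks are routine though not mechanical: checking that $\prod_n Y_n$ and $\widetilde{\mathfrak U}$ are genuinely EDS with the claimed computability, and recording the computable-analysis facts used above (that a recursively compact subset of a computable metric space is a computable image of the Cantor set, and that disjointness of an explicitly computable compact set from an effectively closed set is semi-decidable).
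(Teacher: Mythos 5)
Your overall architecture is the same as the paper's: the same split into two implications, the same ``repair'' of a uniform enumeration of effective subshifts using the domino-problem oracle followed by taking the product, and, for the converse, the same enumeration of sliding block codes (equivalently, of clopen partitions of the universal system) together with a semi-decision of the avoidance condition against finite-stage approximations of the effectively closed set $\widetilde{\mathfrak U}$. One step, however, is genuinely wrong as stated: the claim that a nonempty recursively compact set $X$ admits a computable surjection $q\colon\{0,1\}^{\NN}\twoheadrightarrow X$. This effective analogue of ``every nonempty compact metric space is a continuous image of the Cantor set'' is false: a computable map sends computable points to computable points, whereas there exist nonempty effectively closed (hence recursively compact) subsets of $\{0,1\}^{\NN}$ with no computable point (e.g.\ a $\Pi^0_1$ class with no computable member). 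So the surjection you invoke both to place every EDS among the $\mathcal X_m$ and to replace the universal EDS by a subshift extension does not exist in general, and you even record this false fact explicitly as one of your supporting lemmas.

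Fortunately the error is local and the repair is exactly what the paper does: instead of pushing forward along $q$, use the orbit (pullback) representation. Given a zero-dimensional EDS conjugate to $G\curvearrowright Z$ with $Z\subseteq\{0,1\}^{\NN}$ effectively closed and the generators acting by computable maps, set
$\widetilde{X}=\{\,y\in(\{0,1\}^{\NN})^{G} : y_{1_G}\in Z \mbox{ and } y_g=g^{-1}\cdot y_{1_G} \mbox{ for all } g\in G\,\}$;
this is shift-invariant, effectively closed (computability of the action lets one semi-decide $y_g\neq g^{-1}\cdot y_{1_G}$, and the decidable word problem lets one enumerate the constraints), and \emph{conjugate} to $Z$ via $y\mapsto y_{1_G}$ --- no surjection from Cantor space is needed, and a conjugate copy serves in both places where you wanted an extension. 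With that substitution the remainder of your argument goes through and coincides with the paper's proof of~\Cref{prop:decid_DP} and~\Cref{prop:undec_DP}.
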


We remark that~\Cref{mainthm:hochmanv2} is just the natural extension of Theorems 1.1 and 1.2 of~\cite{Hochman2009} to finitely generated groups with decidable word problem, and that this generalization does not present any additional difficulties. We provide this proof in~\Cref{sec:universalEDS} for the sake of completeness.

The second ingredient is the existence of universal wEDS on any finitely generated group.



\begin{thmx}\label{mainthm:universalweds}
    For every finitely generated group $G$ there exists a universal wEDS $G\curvearrowright \mathfrak{X}$.
\end{thmx}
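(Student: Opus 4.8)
The plan is to realise a universal wEDS as an inverse limit indexed not by a sequence but by a \emph{tree} --- the tree of all finite inverse systems of effective $G$-subshifts. Fix once and for all a finite generating set of $G$, so that ``effective $G$-subshift'' and ``sliding block code'' are defined and each is specified by finitely much data (a finite alphabet together with a Turing machine enumerating forbidden patterns, respectively a finite window together with a local rule). Let $\mathcal{T}$ be the set of all finite chains $Z_1 \xleftarrow{q_1} Z_2 \xleftarrow{q_2} \cdots \xleftarrow{q_{m-1}} Z_m$ of nonempty effective $G$-subshifts with factor maps $q_i$; since each such chain amounts to finitely much data, $\mathcal{T}$ is countable. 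This step uses no hypothesis on $G$, which is consistent with the statement. Order $\mathcal{T}$ by truncation: write $T \lessdot T'$ when $T'$ is obtained from $T$ by appending one further subshift and factor map at the top. Then $(\mathcal{T},\lessdot)$ is a forest in which every node has only finitely many predecessors, and assigning to each node $T$ its top space $\hat{T}$, and to each covering pair $T \lessdot T'$ the appended factor map $\hat{T}' \to \hat{T}$, defines an inverse system over $\mathcal{T}$ all of whose bonding maps are surjective.

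Set $\mathfrak{X} := \varprojlim_{T \in \mathcal{T}} \hat{T}$, the space of all families $(y_T)_{T \in \mathcal{T}}$ with $y_T \in \hat{T}$ that are compatible with every bonding map, equipped with the diagonal $G$-action. To see that $\mathfrak{X}$ is a wEDS, fix an enumeration of $\mathcal{T}$ and let $\mathcal{T}_1 \subseteq \mathcal{T}_2 \subseteq \cdots$ be the finite downward-closed subforests obtained by throwing in the first $k$ listed systems together with all of their predecessors, so that $\bigcup_k \mathcal{T}_k = \mathcal{T}$. Each $\mathfrak{X}_k := \varprojlim_{T \in \mathcal{T}_k} \hat{T}$ sits inside the finite product $\prod_{T \in \mathcal{T}_k} \hat{T}$ of effective subshifts as the subset defined by the coherence equations $q(y_{T'}) = y_T$ over the finitely many covering pairs of $\mathcal{T}_k$; each such equation rules out only a finite set of patterns, determined by the window of $q$, so $\mathfrak{X}_k$ is again an effective $G$-subshift. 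The coordinate-restriction maps $\mathfrak{X}_{k+1} \to \mathfrak{X}_k$ are sliding block codes, and they are onto because all bonding maps are, so that any finite section extends; and visibly $\varprojlim_k \mathfrak{X}_k = \mathfrak{X}$. Hence $G \curvearrowright \mathfrak{X}$ is an inverse limit of a sequence of effective $G$-subshifts, i.e.\ a wEDS.

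For universality, let $G \curvearrowright Y$ be any wEDS and write $Y = \varprojlim(Y_n,\pi_n)$ with the $Y_n$ effective subshifts and the $\pi_n$ factor maps, as in the definition. The truncations $T^{[m]} := (Y_1 \xleftarrow{\pi_1} \cdots \xleftarrow{\pi_{m-1}} Y_m)$ for $m \geq 1$ form an infinite branch $\beta$ of $\mathcal{T}$ whose successive top spaces are $Y_1, Y_2, \dots$ with bonding maps $\pi_1, \pi_2, \dots$; therefore $\varprojlim_{T \in \beta} \hat{T} = Y$, and the coordinate restriction $r_\beta \colon \mathfrak{X} \to Y$, $(y_T)_T \mapsto (y_{T^{[m]}})_{m \geq 1}$, is continuous and $G$-equivariant. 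It is surjective: given $(z_m)_m \in Y$, set $y_{T^{[m]}} := z_m$ and extend to a compatible family over all of $\mathcal{T}$ by treating the nodes in an order in which every node follows its finitely many predecessors and choosing, at each newly treated node, a preimage of the value already assigned to its parent --- such a preimage exists because the bonding maps are surjective. Thus $Y$ is a topological factor of $\mathfrak{X}$, so $G \curvearrowright \mathfrak{X}$ is a universal wEDS. (As usual we work with nonempty systems; the empty system admits no factor map from a nonempty one.)

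I expect the construction to be essentially soft, with no deep obstacle. The one genuine design choice is the index set: a bare product of all effective $G$-subshifts factors onto each of them individually but its coordinates record none of the coherence that binds the successive levels of an arbitrary inverse limit, whereas in the tree $\mathcal{T}$ the wEDS are precisely the data carried by the infinite branches. What then has to be checked, and is routine but not entirely free, is that the finite stages $\mathfrak{X}_k$ really are effective $G$-subshifts and that a section defined along a single branch propagates to a section over the whole tree; both follow from the local (bounded-window) nature of sliding block codes and from the surjectivity of the bonding maps.
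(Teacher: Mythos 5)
Your proof is correct and takes essentially the same approach as the paper: the paper encodes your forest of finite chains implicitly, as tuples $(h,s,X,Y,f)$ carrying a height label and a pointer to the predecessor coordinate, and your finite downward-closed subforests $\mathcal{T}_k$ and branch-restriction map correspond exactly to its sets $A_k$ of descendants and its map $\xi$. The differences are purely notational (including the treatment of groups with undecidable word problem, which both arguments handle by passing to algorithmic descriptions over a fixed generating set, i.e.\ to the free group).
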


We will now show that~\Cref{mainthm:nonexistence_factor_to_weds} follows from~\Cref{mainthm:hochmanv2} and~\Cref{mainthm:universalweds}. Suppose that every wEDS on $G$ is the topological factor of some EDS in $G$, it follows from~\Cref{mainthm:universalweds} that there exists a universal EDS on $G$. Indeed, by our assumption the universal wEDS $G\curvearrowright \mathfrak{X}$ admits an EDS extension $G\curvearrowright \mathfrak{U}$. As every wEDS (and in particular every EDS) is the topological factor of $G\curvearrowright \mathfrak{X}$, it follows that $G\curvearrowright \mathfrak{U}$ is universal.

In particular, if $G$ is a finitely generated group with decidable word problem and undecidable domino problem, the conclusion from the argument above would contradict~\Cref{mainthm:hochmanv2}, and thus we obtain~\Cref{mainthm:nonexistence_factor_to_weds}.


In~\Cref{sec:prelim} we introduce the reader to the the necessary background material and set the notation for the rest of the article. In~\Cref{sec:universalEDS} we provide the proof of~\Cref{prop:decid_DP,prop:undec_DP}  which together prove~\Cref{mainthm:hochmanv2}. In~\Cref{sec:universal_weds} we show the existence of a universal wEDS for any finitely generated group with decidable word problem, and then explain how this result is valid even without the assumption of decidable word problem. Finally, in~\Cref{sec:conclusion} we discuss the missing case of groups with decidable domino problem and give a few pointers towards a characterization of topological factors of EDS.

\subsubsection*{Acknowledgments}
The authors are grateful to N. Carrasco-Vargas for suggesting helpful improvements on our initial draft. We also thank an anonymous referee for their suggestions which greatly improved the exposition of this note. S. Barbieri was supported by the ANID project FONDECYT regular 1240085, AMSUD240026  and ECOS230003. L. Poirier acknowledges that this research was partly done in the University of Santiago de Chile during an internship and is grateful to ENS de Lyon for their funding and support.

\section{Preliminaries}\label{sec:prelim}

We denote the set of non-negative integers by $\NN$. For a finite set $A$ we write $A^*= \bigcup_{n \in \NN}A^n$ for the set of all words on $A$. A subset $L\subset A^*$ is called a \define{language}. We say $L$ is \define{recursively enumerable} if there is an algorithm which halts on input $w \in A^*$ if and only if $w \in L$. We say that $L$ is \define{decidable} if both $L$ and $A^*\setminus L$ are recursively enumerable. A partial map $g\colon A^* \to B^*$ is computable if there is an algorithm which on input $(u,v) \in A^*\times B^*$ halts if and only if $v = g(u)$.

Through the canonical binary coding we identify non-negative integers with words in $\{0,1\}^*$ and thus extend the notions of recursive enumerable and decidable to sets of non-negative integers.

A sequence of recursively enumerable languages $(L_n)_{n \in \NN}$ is \define{uniform} if there is an algorithm with in input $n \in \NN$ and a word $w$ halts if and only if $w \in L_n$.

For a finitely generated group $G$ and a finite set of generators $S\subset G$, the \define{word problem} is the language \[ \texttt{WP}_S(G) = \{w \in S^* : w \mbox{ represents the identity in } G\}.    \]
We say that a group has \define{decidable word problem} if $\texttt{WP}_S(G)$ is a decidable language. We note that this is a property of the group, in the sense that is does not depend upon the set of generators. In most of the article (with the only exception being the discussion after~\Cref{thm:universal_wEDS}) we will consider finitely generated groups with decidable word problem.

We remark that if $G$ is a group with decidable word problem, then there exists a bijection $\nu \colon \NN \to G$ with the property that the maps $(n,m) \mapsto \nu^{-1}(\nu(n)\cdot \nu(m))$ and $n \mapsto \nu^{-1}(\nu(n)^{-1})$ are computable. Consequently, we can speak about recursively enumerable and decidable subsets of $G$ through this identification and perform the group operations algorithmically.

\subsection{Subshifts and morphisms}

Given a group $G$ acting on topological spaces $X$ and $Y$ by homeomorphisms, we say that a map $f\colon X \to Y$ is a \define{morphism} if it is continuous and $G$-equivariant, that is, for every $g \in G$ and $x \in X$ we have $f(gx)=gf(x)$. We say that a morphism $f$ is a \define{topological factor map} if it is surjective, and we say it is a \define{topological conjugacy} if it is a homeomorphism.

Given a topological space $X$ and a group $G$, we endow the space $X^G$ with the product topology and the left shift map given by \[ (g\cdot x)(h) = x(g^{-1}h) \mbox{ for all } x \in X^G, g,h \in G.  \]

\begin{definition}
    A subshift is a closed and $G$-invariant subset of $A^G$, where $A$ is a finite set endowed with the discrete topology.
\end{definition}

Fix a finite set $A$. Given a finite $F\subset G$, a map $p \colon F \to A$ is called a \define{pattern} with \define{support} $F$ and its associated cylinder set is given by $[p] = \{ x \in A^G : x|_F = p\}$. A subshift can be equivalently defined as a subset of $A^G$ which avoids a collection of patterns. That is, for which there exists a collection $\FF$ of patterns such that \[ X = X_{\FF} = A^G \setminus \bigcup_{g \in G}\bigcup_{p \in \FF}g\cdot [p].   \]

We shall need the following characterization of morphisms between subshifts, for a proof see~\cite[Theorem 1.8.1]{ceccherini2010cellular}.

\begin{theorem}[Curtis-Hedlund-Lyndon]
    Let $X\subset A^G$ and $Y\subset B^G$ be subshifts and $f\colon X \to Y$ a morphism. There exists a finite $F\subset G$ and $\Phi \colon A^F\to B$ such that \[  f(x)(g) = \Phi\bigl((g^{-1}\cdot x) |_F\bigr) \mbox{ for all } x \in X, g \in G. \]
\end{theorem}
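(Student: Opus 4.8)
The plan is to first isolate the action of $f$ at the identity element, then use $G$-equivariance to recover $f$ everywhere, and finally invoke compactness of $X$ to see that this local rule reads only finitely many coordinates.

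First I would set $\pi\colon X\to B$, $\pi(x)=f(x)(1_G)$, where $1_G$ denotes the identity of $G$. As evaluation at $1_G$ is continuous on $B^G$, the map $\pi$ is continuous. Using the convention $(g\cdot y)(h)=y(g^{-1}h)$ together with the $G$-equivariance of $f$, for every $x\in X$ and $g\in G$ we have
\[ f(x)(g) = (g^{-1}\cdot f(x))(1_G) = f(g^{-1}\cdot x)(1_G) = \pi(g^{-1}\cdot x). \]
Hence it is enough to produce a finite $F\subseteq G$ and a map $\Phi\colon A^F\to B$ with $\pi(x)=\Phi(x|_F)$ for all $x\in X$: plugging $g^{-1}\cdot x$ into this identity then yields the statement.

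Next, since $X$ is closed in the compact space $A^G$ it is itself compact, and since $B$ is finite and discrete, $X$ decomposes into the clopen sets $\pi^{-1}(b)$, $b\in B$. Each $\pi^{-1}(b)$ is open, hence a union of cylinder sets $[p]\cap X$ contained in it, and compact, hence a finite such union. Taking $F$ to be the (finite) union of the supports of all patterns appearing in these finite unions, over all $b\in B$, and refining each cylinder into the disjoint union of cylinders whose support is exactly $F$, one sees that membership of $x$ in $\pi^{-1}(b)$ depends only on $x|_F$. In other words $\pi(x)$ is a function of $x|_F$; this defines $\Phi$ on $\{x|_F : x\in X\}\subseteq A^F$, and we extend $\Phi$ to all of $A^F$ arbitrarily. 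Combining with the previous paragraph gives $f(x)(g)=\pi(g^{-1}\cdot x)=\Phi\bigl((g^{-1}\cdot x)|_F\bigr)$ for all $x\in X$ and $g\in G$, as required.

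The hard part is exactly the uniform choice of $F$: continuity of $\pi$ alone gives, for each individual $x$, a finite window $F_x$ controlling $\pi$ near $x$, but a priori $\bigcup_x F_x$ is infinite; it is the combination of the finiteness of $B$ with the compactness of $X$ that collapses this to a single finite $F$. The remaining points — that cylinders form a clopen basis of $X$, that a cylinder over a pattern of support $F'\subseteq F$ is the disjoint union of the cylinders over its extensions to $F$, and that extending $\Phi$ from the set of admissible $F$-patterns to all of $A^F$ is harmless since the formula is only asserted on $X$ — are routine.
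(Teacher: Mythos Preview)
Your argument is correct and is the standard proof of the Curtis--Hedlund--Lyndon theorem: localize $f$ at the identity via $\pi(x)=f(x)(1_G)$, transport by equivariance, and then use compactness of $X$ together with finiteness of $B$ to produce a single finite window $F$. The paper does not supply its own proof of this statement; it simply refers the reader to \cite[Theorem 1.8.1]{ceccherini2010cellular}, whose proof follows exactly the same strategy you outline, so there is nothing to contrast.
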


\subsection{The domino problem}

For this short subsection, fix a finitely generated group $G$ with decidable word problem. A subshift $X\subset A^G$ is called of finite type if it there exists a finite set of patterns $\FF$ such that $X=X_{\FF}$.

\begin{definition}
    We say that $G$ has decidable domino problem, if there is an algorithm which on input a finite set of patterns $\FF$ decides whether the subshift $X_{\FF}$ is nonempty.
\end{definition}

We note that for every recursively presented and finitely generated group there exists an algorithm which on input a finite set of patterns $\FF$ halts if and only if the subshift $X_{\FF}$ is empty (see~\cite[Proposition 9.3.29]{ABJ2018}). Hence we may alternatively say that $G$ has decidable domino problem, if there is an algorithm which on input a finite set of patterns $\FF$ halts if and only if the subshift $X_{\FF}$ is nonempty.

We also note that to our current knowledge, the only finitely generated groups known to have decidable domino problem are the virtually free groups. It has been conjectured that they are the only ones~\cite{ballier_domino_2018}.

\subsection{Computability in zero-dimensional spaces}

Given a finite set $A$,  we consider the Cantor space $A^{\NN}$ endowed with the product of the discrete topology. For a word $w \in A^*$ we denote by $[w]$ the cylinder set of all $x\in A^{\NN}$ which begin with $w$.

Next we shall succinctly introduce computability notions for subsets of the Cantor space. For a friendlier presentation we refer the reader to~\cite{BarCarRoj2025}.

\begin{definition}
    Let $A$ be a finite set. We say a set $X\subset A^{\NN}$ is \define{effectively closed} if there exists a recursively enumerable language $L\subset A^*$ such that \[ X = A^{\NN}\setminus \bigcup_{w \in L}[w].  \]
\end{definition}

\begin{definition}
    Let $A,B$ be finite sets. Given $X\subset A^{\NN}$, we say a map $f\colon X \to B^{\NN}$ is \define{computable} if there exists a partial computable map $g \colon A^* \to B^*$ such that for any $x \in X$, $g(x|_{\{0,\dots,n\}})$ is defined for every $n$, the cylinders $[g(x|_{\{0,\dots,n\}})]$ form a nested sequence and \[ \{f(x)\}  = \bigcap_{n \in \NN} [g(x|_{\{0,\dots,n\}})]. \]
\end{definition}

\begin{definition}
    Let $G$ be a finitely generated group. We say a left action $G\curvearrowright X$ is a zero-dimensional \define{effective dynamical system} (EDS) if it is topologically conjugate to a left action $G\curvearrowright Y$ where $Y\subset \{0,1\}^{\NN}$ is an effectively closed set and $G$ acts by computable maps.
\end{definition}

We note that there is a more general definition of EDS on which $G$ is allowed to act in more general metric spaces (see~\cite[Definition 3.25]{BarCarRoj2025}), however, the definitions are equivalent in the zero-dimensional case~\cite[Proposition 4.10]{BarCarRoj2025}, and furthermore, for recursively presented groups (in particular, for groups with decidable word problem), every EDS is the topological factor of a zero-dimensional EDS~\cite[Theorem A]{BarCarRoj2025}. Therefore, for the purpose of this note, the zero-dimensional definition given above is enough.

\begin{definition}
    A subshift $X\subset A^G$ is called \define{effective} if the left shift action $G\curvearrowright X$ is an EDS.
\end{definition}

We recall that in the case where $G$ is a finitely generated group with decidable word problem we have a natural bijection $\nu\colon \NN \to G$ that makes the standard group operations computable. Using this, we may encode a pattern $p\colon F \to A$ where $F\subset G$ is finite as a word $w \in (A\cup \{\times\})^*$, where the word $w = w_0\dots w_{n-1}$ codifies the pattern $p_w$ with support $F_w= \{\nu(i) : i \in \{0,\dots,n-1\} \mbox{ and } w_i \neq \times\}$ and is defined by $p_w(\nu(i))=w_i$. It follows that we may unambiguously speak about recursively enumerable sets of patterns in such groups. The following characterization of effective subshifts is well-known, a proof can be found in~\cite[Corollary 7.7]{BarCarRoj2025}.

\begin{proposition}
    Let $G$ be a finitely generated group with decidable word problem. A subshift $X\subset A^G$ is effective if and only if there exists a recursively enumerable set of forbidden patterns $\FF$ such that \[ X = A^G\setminus \bigcup_{p \in \FF}g[p].   \]
\end{proposition}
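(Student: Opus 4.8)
The plan is to prove the two implications separately. The implication ``recursively enumerable forbidden patterns $\Rightarrow$ effective'' is a hands-on construction, while the converse is where the content lies; in the converse I first reduce to a statement about semi-decidability of emptiness of cylinders, and the delicate point will be effectivity of the conjugating homeomorphism.

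\emph{Recursively enumerable forbidden patterns $\Rightarrow$ effective.} Since $G$ has decidable word problem, fix a bijection $\nu\colon\NN\to G$ with computable group operations. The evaluation map $\iota\colon A^G\to A^{\NN}$, $\iota(x)=(x(\nu(n)))_{n\in\NN}$, is a homeomorphism, and post-composing with a fixed block encoding $A\hookrightarrow\{0,1\}^k$ realizes $A^G$ as a decidable---hence effectively closed---subset of $\{0,1\}^{\NN}$; let $Y$ denote the image of $X$. First, $Y$ is effectively closed: a word fails to encode a point of $Y$ precisely when it is not a legal block encoding (a decidable property of finite prefixes) or when the point $x$ it encodes satisfies $x|_{g\cdot\mathrm{supp}(p)}=g\cdot p$ for some $g\in G$ and some $p\in\FF$; enumerating $\FF$ together with $G$ through $\nu$, and using computability of the group operations to locate the finitely many coordinates $\nu^{-1}(g\cdot f)$ for $f\in\mathrm{supp}(p)$, produces a recursively enumerable list of cylinders whose union is the complement of $Y$. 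Second, in these coordinates the shift by a generator $s$ is the permutation of coordinates induced by the computable bijection $n\mapsto\nu^{-1}(s^{-1}\nu(n))$ of $\NN$, so any finite prefix of the image depends only on a finite prefix of the input and $G$ acts by computable maps. Hence $G\curvearrowright X\cong G\curvearrowright Y$ is an EDS.

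\emph{Effective $\Rightarrow$ recursively enumerable forbidden patterns.} It suffices to show that $\FF_X:=\{p : [p]\cap X=\varnothing\}$---patterns being identified with words via $\nu$---is recursively enumerable, because then $X=X_{\FF_X}$: the inclusion $X\subseteq X_{\FF_X}$ is immediate from $G$-invariance, and if $y\notin X$ then, $X$ being closed, some restriction $y|_F$ already satisfies $[y|_F]\cap X=\varnothing$, so $y$ contains a forbidden pattern. Fix a topological conjugacy $\phi\colon X\to Y$ onto an EDS $G\curvearrowright Y$ with $Y\subseteq\{0,1\}^{\NN}$ effectively closed and $G$ acting by computable maps, and set $\pi:=\mathrm{ev}_e\circ\phi^{-1}\colon Y\to A$, so that $\phi^{-1}(y)(g)=\pi(g^{-1}y)$ by equivariance. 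Then $[p]\cap X=\varnothing$ if and only if there is no $y\in Y$ with $\pi(g^{-1}y)=p(g)$ for every $g\in\mathrm{supp}(p)$. Each single condition $\pi(g^{-1}y)=p(g)$ defines a subset of $Y$ that is decidable relative to $Y$---it is the preimage, under the computable map $y\mapsto g^{-1}y$, of a subset of $\{0,1\}^{\NN}$ whose membership is decided by an algorithm reading a finite prefix---and a finite intersection of such sets is again of this kind. Its emptiness is then semi-decidable: writing this intersection as $Y\cap V$ and its relative complement as $Y\cap V'$ with $V,V'$ recursively enumerable open, and using that $Y^{c}$ is recursively enumerable open and $\{0,1\}^{\NN}$ is compact, emptiness amounts to $Y^{c}\cup V'=\{0,1\}^{\NN}$, which is detected by searching for a finite subfamily of the enumerated cylinders that already covers $\{0,1\}^{\NN}$. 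This semi-decides $[p]\cap X=\varnothing$, as required.

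The step I expect to be the main obstacle is the claim that $\pi$ is ``decidable relative to $Y$'': a priori the conjugacy $\phi$ is only continuous, so the finite local rule computing $\pi$ need not be handed to us effectively, and indeed knowing it effectively is essentially equivalent to the conclusion we are after. Making this rigorous requires the ambient computable-analysis toolkit---recursive compactness of the state space of the EDS together with the effective-compactness arguments behind the characterization of effectively closed sets---to replace $\phi$ by a conjugacy with computable inverse (equivalently, to verify directly that the set of patterns not occurring in $X$ is recursively enumerable); this is precisely the work carried out in~\cite{BarCarRoj2025}, which we invoke for the details.
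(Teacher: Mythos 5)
The paper itself does not prove this proposition: it is stated as ``well-known'' and deferred entirely to \cite[Corollary 7.7]{BarCarRoj2025}, so there is no in-paper argument to compare against. Your first implication (recursively enumerable forbidden patterns $\Rightarrow$ effective) is correct and complete: identifying $A^G$ with an effectively closed subset of $\{0,1\}^{\NN}$ via a computable bijection $\nu\colon\NN\to G$, enumerating the complementary cylinders coming from $\FF$ and the orbit of its supports, and observing that each generator acts by the computable coordinate permutation $n\mapsto\nu^{-1}(s^{-1}\nu(n))$ are exactly the right ingredients.

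The converse is where your argument has a genuine gap, and it is the one you flag yourself. Your reduction --- semi-decide $[p]\cap X=\varnothing$ by rewriting it as a covering of the recursively compact set $Y$ by recursively enumerable open sets --- only gets off the ground once you possess an explicit finite description of the clopen sets $\pi^{-1}(a)\subset Y$. A priori $\pi=\mathrm{ev}_{1_G}\circ\phi^{-1}$ is merely continuous, so each $\pi^{-1}(a)$ is clopen in $Y$ but no algorithm hands you the finite union of cylinders realizing it; without that, the sets you call $V$ and $V'$ are not recursively enumerable open and the compactness search cannot be launched. Note also that the natural fix of enumerating all candidate clopen partitions of $Y$ (their being partitions of $Y$ is semi-decidable by recursive compactness) does not work directly: each valid candidate induces \emph{some} subshift factor of $G\curvearrowright Y$, but not necessarily $X$, so the patterns one would enumerate need not be the forbidden patterns of $X$. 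Effectivizing the conjugacy is therefore not a technical afterthought but the entire content of this direction, and your proof ultimately invokes the same external result (\cite{BarCarRoj2025}) that the paper cites for the whole proposition. As a self-contained proof it is incomplete at precisely the point where all the difficulty is concentrated; as a proof modulo the literature it is consistent with --- and more detailed than --- the paper's own treatment.
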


A similar description can be given for general zero-dimensional EDS, albeit, with an infinite alphabet. Again, if $G$ is a finitely generated group with decidable word problem, the set $G\times \NN$ can be identified by a computable bijection with $\NN$, and thus we may speak about effectively closed subsets of $A^{G\times \NN}$. We endow the space $A^{G\times \NN}$ with the left shift action given by \[ \bigl(g\cdot y\bigr)(h,n) = y(g^{-1}h,n) \mbox{ for all } y \in Y, g,h \in G \mbox{ and } n \in \NN.   \]

Notice that the left shift action $G\curvearrowright A^{G\times \NN}$ is computable. The following characterization of zero-dimensional EDS is quite useful as it encodes the computability of an arbitrary action in the topology of the space. We give a brief proof sketch.

\begin{proposition}\label{prop:orbitrep}
    Let $G$ be a finitely generated group with decidable word problem. An action $G\curvearrowright X$ is a zero-dimensional EDS if and only if it is topologically conjugate to the shift action $G\curvearrowright Y$ for some effectively closed and $G$-invariant $Y\subset \{0,1\}^{G\times \NN}$.
\end{proposition}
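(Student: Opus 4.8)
The plan is to prove the two implications separately. For the ``if'' direction, suppose $G\curvearrowright Y$ with $Y\subset\{0,1\}^{G\times\NN}$ effectively closed and $G$-invariant. Since $G$ has decidable word problem, $G\times\NN$ is in computable bijection with $\NN$, and under the induced identification $\{0,1\}^{G\times\NN}\cong\{0,1\}^{\NN}$ the set $Y$ stays effectively closed while the shift maps $y\mapsto g\cdot y$ — which are explicitly computable on all of $\{0,1\}^{G\times\NN}$, as noted before the statement — restrict to computable maps of $Y$. Hence $G\curvearrowright Y$ is a zero-dimensional EDS, and so is any action topologically conjugate to it.

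For the ``only if'' direction, fix a topological conjugacy identifying $G\curvearrowright X$ with $G\curvearrowright Z$, where $Z\subset\{0,1\}^{\NN}$ is effectively closed and each $g\in G$ acts by a computable map, uniformly in $g$ (write $g$ as a word in the generators and compose the computable maps of the generators, using decidability of the word problem, and enumerate $G$ through the fixed computable bijection $\nu\colon\NN\to G$). Define $\phi\colon Z\to\{0,1\}^{G\times\NN}$ by $\phi(z)(g,n)=(g^{-1}\cdot z)(n)$. Then $\phi$ is $G$-equivariant for the shift action on $\{0,1\}^{G\times\NN}$, continuous (each coordinate is the composite of the continuous map $z\mapsto g^{-1}\cdot z$ with a projection), and injective since $\phi(z)(e_G,\cdot)=z$; as $Z$ is compact, $\phi$ is a topological conjugacy onto its image $Y:=\phi(Z)$, which is $G$-invariant because $Z$ is. It remains to show that $Y$ is effectively closed, and this is the only real point.

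Here one uses the explicit description
\[ Y=\bigl\{\, y\in\{0,1\}^{G\times\NN} : y(e_G,\cdot)\in Z \ \text{ and }\ y(g,n)=(g^{-1}\cdot y(e_G,\cdot))(n)\ \text{ for all } g\in G,\ n\in\NN \,\bigr\}, \]
valid because the only possible $\phi$-preimage of $y$ is $y(e_G,\cdot)$. I then enumerate a set $\mathcal{P}$ of finite patterns on $G\times\NN$ (identified with $\NN$) by dovetailing two kinds of witnesses. First, for each word $w$ in the recursively enumerable language $L$ with $Z=\{0,1\}^{\NN}\setminus\bigcup_{w\in L}[w]$, add the pattern on $\{e_G\}\times\{0,\dots,|w|-1\}$ reading $w$ along the $e_G$-row; any $y$ in its cylinder has $y(e_G,\cdot)\notin Z$. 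Second, for each $g\in G$, each $n\in\NN$, and each finite word $u$ on which the computable map of $g^{-1}$ is defined with value $v$ of length greater than $n$, add — for the bit $b\neq v_n$ — the pattern with domain $(\{e_G\}\times\{0,\dots,|u|-1\})\cup\{(g,n)\}$ equal to $u$ on the $e_G$-row and to $b$ at $(g,n)$; for any $y$ in its cylinder, either $y(e_G,\cdot)\notin Z$, or $y(e_G,\cdot)\in Z$ and then $g^{-1}\cdot y(e_G,\cdot)\in[v]$ (since $u$ is a prefix of $y(e_G,\cdot)$ and the output cylinders of the computable map contain $g^{-1}\cdot y(e_G,\cdot)$), so $(g^{-1}\cdot y(e_G,\cdot))(n)=v_n\neq y(g,n)$; in both cases $y\notin Y$. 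Conversely, if $y\notin Y$ then either $y(e_G,\cdot)\notin Z$, so some prefix of its $e_G$-row lies in $L$ and $y$ is in a cylinder of the first kind, or $y(e_G,\cdot)\in Z$ while $y(g,n)\neq(g^{-1}\cdot y(e_G,\cdot))(n)$ for some $g,n$; taking a prefix $u$ of $y(e_G,\cdot)$ long enough that the computable map of $g^{-1}$ already determines coordinate $n$ exhibits $y$ in a cylinder of the second kind. Therefore $Y=\{0,1\}^{G\times\NN}\setminus\bigcup_{p\in\mathcal{P}}[p]$ with $\mathcal{P}$ recursively enumerable, i.e.\ $Y$ is effectively closed, and $G\curvearrowright X\cong G\curvearrowright Y$ as required.

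I expect the only delicate part to be the bookkeeping for the second kind of witness: checking that a disagreement between $y(g,n)$ and the value forced on it by the computable action of $g^{-1}$ is always detectable from a finite pattern, and that no enumerated pattern ever meets $Y$. The equivariance, continuity, and injectivity of $\phi$, as well as the ``if'' direction, are immediate.
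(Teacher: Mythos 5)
Your proof is correct and follows the same route as the paper: both directions are handled identically, with the forward direction using the orbit embedding $\phi(z)(g,n)=(g^{-1}\cdot z)(n)$ onto the orbit space $Y$. The paper leaves the effective closedness of $Y$ as a one-line assertion (``one can deduce''), and your two-family enumeration of forbidden patterns is a valid filling-in of exactly that step.
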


\begin{proof}
    The left shift action is computable in $\{0,1\}^{G\times \NN}$ and thus if $G\curvearrowright X$ is topologically conjugate to the left shift action $G\curvearrowright Y$ for some effectively closed $Y\subset \{0,1\}^{G\times \NN}$, it follows that it is an EDS.

    Conversely, if $G\curvearrowright X$ is an EDS, it is topologically conjugate to a left action $G\curvearrowright Z$ where $Z\subset \{0,1\}^{\NN}$ and $G$ acts by computable maps. Consider the orbit space $Y\subset \{0,1\}^{G\times \NN}$, where $y \in Y$ if and only if $\bigl(y(1_G,n)\bigr)_{n \in \NN} \in Z$ and  $\bigl(y(g,n)\bigr)_{n \in \NN} = g^{-1} \cdot \bigl(y(1_G,n)\bigr)_{n \in \NN}$ for all $g \in G$.  
    It is clear that $Y$ is $G$-invariant and that $G\curvearrowright Z$ is topologically conjugate to the shift action of $G$ on $Y$. Finally, from the fact that $Z$ is effectively closed and that $G$ acts by computable maps one can deduce that $Y$ is effectively closed.\end{proof}



\subsection{Weak effective dynamical systems and universality}

Let $(G \curvearrowright X_n)_{n \in \NN}$ be a sequence of actions of a group $G$ on topological spaces $X_n$ by homeomorphisms and a sequence $(\pi_n)_{n \in \NN}$ of topological factor maps $\pi_{n}\colon X_{n+1} \to X_{n}$. The \define{inverse limit} associated to these sequences is the space \[ \lim_{\leftarrow} X_n = \{ (x_n)_{n \in \NN} \in \prod_{n \in \NN} X_n : x_n = \pi_{n}(x_{n+1}) \mbox{ for every } n \in \NN \},     \]
endowed with the pointwise action of $G$.

\begin{definition}
    An action of a group $G$ on a zero-dimensional space is called a \define{weak effective dynamical system} (wEDS) if it is topologically conjugate to an inverse limit of EDS.
\end{definition}

We remark that in~\cite{BarCarRoj2025} a wEDS is defined as the inverse limit of effective subshifts rather than of arbitrary EDS. These two notions are equivalent.

\begin{proposition}\label{prop:wEDS_effective_subshift}
    A $G$-action is a wEDS if and only if it topologically conjugate to an inverse limit of effective subshifts.
\end{proposition}

\begin{proof}
    Effective subshifts are EDS, so naturally every inverse limit of subshifts is a wEDS. For the converse, we employ a diagonal argument. Let $(G\curvearrowright X_n)_{n \in \NN}$ be a sequence of EDS. By~\Cref{prop:orbitrep}, we can without loss of generality assume $G\curvearrowright X_n$ is the shift action on an effectively closed and shift invariant $X_n \subset \{0,1\}^{G\times \NN}$. Let $\pi_{n}\colon X_{n+1} \to X_{n}$ be factor maps and let $X$ be the inverse limit associated to these sequences. 

    Consider $Z \subset \{0,1\}^{G\times \NN^2}$ as the set of maps $z\colon G\times \NN^2\to \{0,1\}$ such that if we let $x_n \in \{0,1\}^{G\times \NN}$ be given by $x_n(g,i) =z(g,i,n)$, then for each $n \in \NN$ we have $x_n \in X_n$ and $\pi_n(x_{n+1})=x_n$. Endow $Z$ with the natural shift $G$-action. It is clear from the definition that $G\curvearrowright Z$ is topologically conjugate to the endowed pointwise $G$-action on $X$.
    
    For each $t \in \NN$, define $Y_t$ as the set \begin{align*}
        Y_t & = \{ y \colon G \to \{0,1\}^{\{0,\dots,t\}^2} : \mbox{ there is } z \in Z: \bigl(y(g)\bigr)(i,n) = z(g,i,n)\}\\
        & = \{ y \colon G \to \{0,1\}^{\{0,\dots,t\}^2} : \mbox{ there is } x \in X_{t+1}: \bigl(y(g)\bigr)(i,n) = \bigl(\pi_n \circ \dots \circ \pi_{t}(x)\bigr)(g,i)\}
    \end{align*}   

    From the second equality and the fact that $X_{t+1}$ is an EDS it follows that $Y_t$ is effectively closed. As $Z$ is shift invariant, it follows that so is $Y_t$ and thus the shift action $G\curvearrowright Y_t$ is an effective subshift. Finally, if we endow the sequence $(G\curvearrowright Y_t)$ with the natural factor maps that restrict the maps in each coordinate, we have that the pointwise shift $G$-action on the inverse limit $\lim_{\leftarrow} Y_t$ is topologically conjugate to $G\curvearrowright Z$.\end{proof}

We note that, as a consequence of~\Cref{prop:orbitrep} every zero-dimensional EDS can be represented as an inverse limit of effective subshifts, however, it has the added property that the sequence of effective subshifts $(X_n)_{n \in \NN}$ is \define{uniform}, that is, there is a single algorithm which on input $n$ produces a list of forbidden patterns for $X_n$ and its associated factor map. In our definition of wEDS we do not ask for the sequences to be uniform.

 \begin{definition}
        We say that an EDS (resp. wEDS) $G\curvearrowright \mathfrak{U}$ is \define{universal} if for every EDS (resp. wEDS) $G\curvearrowright X$ there exists a topological factor map $f\colon \mathfrak{U}\to X$.
    \end{definition}

\section{Existence of universal EDS}\label{sec:universalEDS}

The purpose of this section is to prove~\Cref{mainthm:hochmanv2}. In~\Cref{prop:decid_DP} we show that a universal EDS exists in any group with decidable domino problem, while in~\Cref{prop:undec_DP} we show that on any group with decidable word problem, the existence of a universal EDS implies the decidability of the domino problem. \Cref{mainthm:hochmanv2} follows from putting both propositions together.

\begin{proposition}\label{prop:decid_DP}
    Let $G$ be a finitely generated group with decidable word and domino problems. There exists a universal EDS $G\curvearrowright \mathfrak{U}$.
\end{proposition}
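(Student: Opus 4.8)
The plan is to take for $\mathfrak U$ a product of \emph{all} nonempty effective subshifts, regularised so that the product is itself an EDS. By \Cref{prop:orbitrep}, every EDS on $G$ is topologically conjugate to the shift action on an effectively closed, $G$-invariant subset of $\{0,1\}^{G\times\NN}$, and, exactly as in the finite-alphabet case, each such subset is of the form $X_\FF$ for some recursively enumerable set $\FF$ of $\{0,1\}$-valued patterns with finite support in $G\times\NN$. Fixing an enumeration $(M_i)_{i\in\NN}$ of all Turing machines and letting $\FF_i$ be the set of patterns output by $M_i$ and $Z_i := X_{\FF_i}$, we see that every EDS is conjugate to $G\curvearrowright Z_{i_0}$ for some $i_0$. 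The naive candidate $\prod_i Z_i$ is unsatisfactory on two counts: it is empty as soon as one $Z_i$ is, and since we cannot recognise algorithmically which $Z_i$ are empty we cannot write down a uniform list of forbidden patterns for the product over the nonempty factors only. Both defects will be repaired using that the domino problem of $G$ is decidable.

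First I would truncate. Running $M_i$, I feed its patterns one at a time into a growing set, but before inserting a pattern $p$ I use the algorithm deciding the domino problem to test whether adjoining $p$ makes the current (finite-type) subshift empty; if so, I discard $p$ and stop processing $M_i$, otherwise I insert $p$ and continue. Call $\hat\FF_i$ the set so produced and $\hat Z_i := X_{\hat\FF_i}$. By construction $\hat Z_i$ is nonempty, and the family $(\hat\FF_i)_{i\in\NN}$ is uniformly recursively enumerable (it is literally produced by the above algorithm). Moreover, if $Z_i\neq\emptyset$ then every finite subset of $\FF_i$ already defines a nonempty subshift of finite type, so the truncation never triggers and $\hat Z_i = Z_i$.

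Then I would set $\mathfrak U := \prod_{i\in\NN}\hat Z_i$, realised inside $\{0,1\}^{G\times(\NN\times\NN)}\cong\{0,1\}^{G\times\NN}$ by placing a configuration of $\hat Z_i$ on the coordinates $G\times(\{i\}\times\NN)$, with $G$ acting by the shift on the $G$-coordinate. This set is $G$-invariant and effectively closed: its complement is the union, over $i\in\NN$, $g\in G$ and $p\in\hat\FF_i$, of the cylinder placing $g\cdot p$ on block $i$, and this is a recursively enumerable family since $(\hat\FF_i)_i$ is uniformly recursively enumerable and $G$ carries a computable bijection with $\NN$. Hence $G\curvearrowright\mathfrak U$ is a zero-dimensional EDS by \Cref{prop:orbitrep}, and it is nonempty because each $\hat Z_i$ is. For universality, take any EDS $G\curvearrowright X$ (nonempty, as is customary), conjugate it to some $Z_{i_0}$, note that $\hat Z_{i_0}=Z_{i_0}$ since $Z_{i_0}\neq\emptyset$, and observe that the coordinate projection $\mathfrak U\to\hat Z_{i_0}$ is a continuous $G$-equivariant surjection — surjective precisely because all the remaining factors $\hat Z_j$ are nonempty — which composed with the conjugacy yields a topological factor map $\mathfrak U\to X$. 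A general EDS that is not zero-dimensional is first obtained as a factor of a zero-dimensional one via \cite[Theorem A]{BarCarRoj2025}.

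The one genuinely essential use of the hypothesis is the truncation step: deciding emptiness of subshifts of finite type is exactly what lets us turn the malformed product $\prod_i Z_i$ into a nonempty, uniformly presented — hence effective — dynamical system without perturbing the nonempty factors, and this is precisely what makes $\mathfrak U$ simultaneously effective, nonempty, and universal. The remaining verifications (equivariance, continuity, effective closedness, and the algorithmic manipulation of patterns and of $G$) are routine and use only that $G$ has decidable word problem.
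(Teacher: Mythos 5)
Your proof is correct and follows essentially the same strategy as the paper's: use the decidability of the domino problem to ``repair'' a uniform enumeration of all effective presentations so that every listed subshift becomes nonempty while the nonempty ones are left untouched, and then take the full product with the coordinatewise shift, obtaining universality via the coordinate projections. The only differences (truncating pattern-by-pattern rather than in batches, and leaving implicit the encoding of finitely many forbidden patterns over $G\times\NN$ as a genuine finite-alphabet $G$-SFT before invoking the domino-problem algorithm) are cosmetic.
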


\begin{proof}

    Let $(L_n)_{n \in \NN}$ be a recursive enumeration of all recursively enumerable subsets of patterns of the form $\{0,1\}^F$, where $F$ ranges over all finite subsets of $G \times \NN$. In this way, every zero-dimensional EDS for some $n \in \NN$ is topologically conjugate to the shift action on \[ X_n = \{0,1\}^{G\times \NN}\setminus \bigcup_{g \in G}\bigcup_{p \in L_n}g[p].   \]

    Consider a recursive enumeration $p_0,p_1,\dots$ of all patterns of the form $\{0,1\}^F$, where $F$ ranges over all finite subsets of $G \times \NN$. For each $n \in \NN$, we construct a sublanguage $L'_n \subset L_n$ as follows:

    For each $k \in \NN$, let $T_k$ be the set of patterns $p\in \{p_0,\dots,p_k\}$ such that the algorithm that recognizes $L_n$ halts on input $p$ on at most $k$ steps. Let $F_k\times I_k$ be the smallest finite subset of $G\times \NN$ such that the support of every pattern in $T_k$ is contained in $F_k\times I_k$. Let $P_k$ be the set of all maps $w\colon F_k \to \{0,1\}^{I_k}$ for which there is $p \in T_k$ such that for every $(g,i)\in \operatorname{supp}(p)$ we have $w(g)(i)=p(g,i)$.

    Consider the subshift of finite type $Z_{n,k} \subset (\{0,1\}^{I_k})^G$ given by \[ Z_{n,k} = (\{0,1\}^{I_k})^G\setminus \bigcup_{g \in G}\bigcup_{w \in P_k}g[w].  \]
    We take $L'_n$ as the union of $P_k$ over all $k$ such that the corresponding $Z_{n,k}$ is nonempty.

    Notice that there is an algorithm that given the algorithm that recognizes $L_n$ and an integer $k$, computes the sets $T_k$, $F_k \times I_k$ and $P_k$. Furthermore, as the domino problem on $G$ is decidable, there is an algorithm which on finite time decides whether $Z_{n,k}$ is empty, thus it follows that $L'_n$ is recursively enumerable. Moreover, as this algorithm is uniform for all $n$, we have that $(L'_n)_{n \in \NN}$ is a uniform sequence of recursively enumerable languages.
    
    Let \[ X'_n = \{0,1\}^{G\times \NN}\setminus \bigcup_{g \in G}\bigcup_{p \in L'_n}g[p].   \]

    By our construction, it follows that $X'_n$ is always nonempty. Furthermore, we have that $L_n = L'_n$ if and only if $X_n \neq \varnothing$, in which case $X_n = X'_n$.

    Let $\mathfrak{U}=\prod_{n \in \NN}X'_n$ and take $G\curvearrowright \mathfrak{U}$ as the coordinate-wise shift. We have that $\mathfrak{U}$ is nonempty and every $G$-EDS is the topological factor of $G\curvearrowright \mathfrak{U}$ by a projection map.

    Finally, $G\curvearrowright \mathfrak{U}$ is an EDS. To see this, for a pattern $p \in \{0,1\}^{F\times I}$, denote by $\iota_n(p)$ the pattern with support $F\times (I\times \{n\})$ such that $\iota_n(p) (g,(i,n))= p(g,i)$ for every $g\in F$ and $i \in I$. We have that $G\curvearrowright \mathfrak{U}$ is topologically conjugate to the shift action on
\[  \mathfrak{X} = \{0,1\}^{G \times \NN^2}\setminus \bigcup_{n \in \NN}\bigcup_{g \in G}\bigcup_{p \in L'_n}g[\iota_n(p)]. \]

Where the conjugacy $\phi\colon \mathfrak{U}\to \mathfrak{X}$ is given by $\phi( (x_n)_{n \in \NN})(g,i,j) = (x_j)(g,i)$. As $(L'_n)_{n \in \NN}$ is a uniform sequence of recursively enumerable languages, it follows that $\mathfrak{U}$ is effectively closed.\end{proof}

\begin{proposition}\label{prop:undec_DP}
   Let $G$ be a group with decidable word problem and suppose that there exists a universal EDS $G\curvearrowright \mathfrak{U}$. Then the domino problem of $G$ is decidable.
\end{proposition}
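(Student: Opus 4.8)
The plan is to show that a universal EDS lets us decide the domino problem by reducing the (co-)emptiness question for subshifts of finite type to a question we can answer using the universal object. The key observation is that emptiness of a SFT $X_{\FF}$ is semi-decidable from the other side automatically (this is noted in the excerpt, citing \cite[Proposition 9.3.29]{ABJ2018}: there is an algorithm halting iff $X_{\FF}=\varnothing$). So it suffices to produce a second algorithm that halts iff $X_{\FF}\neq\varnothing$; running both in parallel then decides the domino problem.

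To build that second algorithm, I would exploit universality as follows. Fix the universal EDS $G\curvearrowright\mathfrak U$, represented (via \Cref{prop:orbitrep}) as an effectively closed $G$-invariant subset of $\{0,1\}^{G\times\NN}$ with computable shift action. Given a finite set of patterns $\FF$ over an alphabet $A$, the SFT $X_{\FF}\subset A^G$ is an effective subshift, hence an EDS; so if it is nonempty there must exist a topological factor map $f\colon\mathfrak U\to X_{\FF}$. By Curtis–Hedlund–Lyndon applied after a suitable encoding (or directly, since $\mathfrak U$ lives on a shift space), such an $f$ is given by a local rule $\Phi\colon\{0,1\}^{E}\to A$ for some finite $E\subset G\times\NN$. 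The plan is then to search over all finite windows $E$ and all candidate local rules $\Phi$, and for each candidate to \emph{verify semi-algorithmically} that (i) $\Phi$ maps $\mathfrak U$ into $X_{\FF}$, i.e. no forbidden pattern of $\FF$ ever appears in the image, and (ii) the image is all of $X_{\FF}$, i.e. $f$ is surjective. If $X_{\FF}\neq\varnothing$ such a pair $(E,\Phi)$ exists and will be found; the algorithm halts, giving the desired semi-decision procedure.

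The main obstacle is making the verification of (i) and (ii) genuinely semi-decidable, and here is where I expect the real work to lie. For (i): ``$\Phi$ pushes $\mathfrak U$ into $X_{\FF}$'' is equivalent to saying that for every forbidden pattern $q\in\FF$ with support $F_q$, no configuration of $\mathfrak U$ reads through $\Phi$ to $q$ on $F_q$; since $\mathfrak U$ is effectively closed, the set of $x$ violating this is effectively open, and its emptiness is \emph{not} obviously semi-decidable in general. The fix is to run the verification the same way the construction in \Cref{prop:decid_DP} does \emph{not} apply here — instead I would use compactness: a continuous equivariant $\Phi$ fails condition (i) iff some cylinder $[w]$ of $\mathfrak U$ (with $w$ a word long enough to determine $\Phi$ on $F_q\cdot E$) forces a forbidden pattern, and the cylinders of the complement of $\mathfrak U$ are recursively enumerated; by König's lemma, if $\Phi(\mathfrak U)\subseteq X_{\FF}$ then a finite subpattern argument certifies it, so ``$\Phi(\mathfrak U)\subseteq X_{\FF}$'' is co-semi-decidable, which is the wrong direction. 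The correct route, I believe, is to instead only certify \emph{nonemptiness of $X_{\FF}$ directly}: if a valid factor $f\colon\mathfrak U\to X_{\FF}$ exists then in particular $X_{\FF}\neq\varnothing$, and surjectivity of $f$ combined with $\mathfrak U\neq\varnothing$ (which holds, $\mathfrak U$ is a nonempty EDS) already yields a point of $X_{\FF}$; so I would search for a pair consisting of a local rule $\Phi$ together with a finite ``proof of surjectivity'' — e.g. a finite set of words in $\mathfrak U$ whose $\Phi$-images, under the shift, cover all length-$m$ patterns admissible for $X_{\FF}$ — and check compatibility with $\FF$ only on the finitely many windows involved. The delicate point is choosing the right finitary certificate of surjectivity; I expect to phrase it via the standard fact that a factor map between subshifts is surjective iff every globally admissible pattern of the target lifts, and to make this effective using that both $\mathfrak U$ and $X_{\FF}$ have recursively enumerable complements. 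Getting this certificate exactly right, and confirming it halts precisely when $X_{\FF}\neq\varnothing$, is the crux of the argument.
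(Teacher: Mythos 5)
Your overall skeleton matches the paper's: run in parallel the known semi-algorithm for emptiness and a second semi-algorithm for nonemptiness obtained by searching for a continuous equivariant map $\mathfrak{U}\to X_{\FF}$, whose existence for nonempty $X_{\FF}$ is guaranteed by universality. But you resolve the crux incorrectly, and the detour you take instead would not work. The point you get backwards is the computability direction of your condition (i). Writing $\mathfrak{U}=\{0,1\}^{G\times\NN}\setminus\bigcup_{i}[u_i]$ with $(u_i)$ recursively enumerable, the statement ``the candidate map never produces a forbidden pattern on $\mathfrak{U}$'' is a finite conjunction (over $\varphi\in\FF$) of statements of the form $\mathfrak{U}\cap C=\varnothing$ with $C$ clopen; since $C$ is compact and the $[u_i]$ are an enumerated open cover of its complement's complement, such emptiness is always witnessed by a finite stage $C\subseteq\bigcup_{i\le m}[u_i]$, a decidable condition once $m$ is fixed. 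Hence condition (i) is \emph{semi-decidable} --- exactly the direction you need --- not co-semi-decidable as you assert. This is precisely how the paper proceeds: it enumerates clopen partitions $(C^t_i)_{i=1}^n$ of $\{0,1\}^{G\times\NN}$ (equivalently, candidate local rules) together with the truncations $\mathfrak{U}_m=\{0,1\}^{G\times\NN}\setminus\bigcup_{i\le m}[u_i]$, and halts as soon as some pair $(m,t)$ certifies that the induced block map avoids $\FF$ already on $\mathfrak{U}_m\supseteq\mathfrak{U}$; compactness guarantees such a pair exists whenever a factor map onto $X_{\FF}$ exists.

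Second, surjectivity is irrelevant, and your attempt to certify it finitarily is where the argument would actually break. Once you have any continuous equivariant $f\colon\mathfrak{U}\to A^G$ whose image avoids $\FF$, the nonemptiness of $\mathfrak{U}$ alone gives $\varnothing\neq f(\mathfrak{U})\subseteq X_{\FF}$; you never need $f$ to be onto. By contrast, your proposed certificate of surjectivity via ``every globally admissible pattern of the target lifts'' presupposes access to the globally admissible patterns of $X_{\FF}$, and recognizing global admissibility in an SFT is itself a domino-type problem that you cannot assume solvable in this setting --- the route is circular. Dropping surjectivity entirely and replacing your condition (ii) by the trivial observation that $\mathfrak{U}\neq\varnothing$ repairs the proof and recovers the paper's argument.
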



\begin{proof}
   Assume without loss of generality that $\mathfrak{U}\subset \{0,1\}^{G \times \NN}$ and that $G\curvearrowright \mathfrak{U}$ is the left shift map. Let $(u_i)_{i \in \NN}$ be a recursively enumerable sequence of maps from finite subsets of $G \times \NN$ to $\{0,1\}$ such that \[ \mathfrak{U} = \{0,1\}^{G\times \NN}\setminus \bigcup_{i \in \NN}[u_i].\] Let $\FF$ be a finite set of patterns over the alphabet $A=\{1,\dots,n\}$ and denote by $X_{\FF}$ its corresponding $G$-SFT. Without loss of generality, by enlarging our finite set of forbidden patterns, we may take a finite subset $W\subset G$ and suppose that every pattern from $\FF$ has support $W$.
    
    Let $(C^t_1,C^t_2,\dots C^t_n)_{t \in \NN}$ be a recursive enumeration of all $n$-tuples of clopen subsets of $\{0,1\}^{G\times \NN}$ such that $(C^t_i)_{i=1}^n$ is a partition of $\{0,1\}^{G\times \NN}$.

    We run the following algorithm which iterates over all tuples $(m,t) \in \NN\times \NN$:
    \begin{enumerate}
        \item Take $\mathfrak{U}_m = \{0,1\}^{G \times \NN}\setminus \bigcup_{i =1}^m[u_i]$. Construct the set \[ P_{m,t} =  \left\{\varphi \colon W \to A : \mathfrak{U}_m\cap \bigcap_{g \in W}g \cdot C^t_{\varphi(g)}   \neq \varnothing \right\}.   \]
        \item If $P_{m,t}\cap \FF = \varnothing$, halt. Otherwise, try the next tuple.
    \end{enumerate}

    Suppose the algorithm halts, that is, there is $(m,t)$ such that $P_{m,t}\cap \FF = \varnothing$. In this case, we can define the map $f\colon \mathfrak{U} \to A^G$ given for $g\in G$ and $x \in \mathfrak{U}$ by the relation \[ f(x)(g) = i \mbox{ if and only if } x \in g\cdot C^t_i.   \] 
    As $(C^t_i)_{i=1}^n$ is a partition of $\{0,1\}^{\NN}$, the map $f$ is well defined and continuous. Furthermore, if we equip $A^G$ with the shift action, we have that $f$ is $G$-equivariant, namely, for every $g \in G$ we have that $f(gx) = g(f(x))$. Moreover, as $\mathfrak{U}$ is nonempty, we have that $f(\mathfrak{U})$ is a nonempty subshift. Finally, as $\mathfrak{U}\subset \mathfrak{U}_m$ and $P_{m,t}\cap \FF = \varnothing$, we obtain that for each $x\in \mathfrak{U}$, we have $f(x)|_W \notin \FF$, and thus $f(\mathfrak{U})\subset X_{\FF}$, which is thus nonempty.

    Now suppose that $X_{\FF}$ is nonempty. As every $G$-SFT is a $G$-EDS, it follows that there exists a factor map $f\colon \mathfrak{U}\to X_{\FF}$. Consider for each $i \in A$ the partition $(U_i)_{i=1}^n$ given by \[U_i = \{x \in \mathfrak{U} : f(x)(1_G) = i\}.\]
    It follows that these sets are clopen, and thus we can find $t \in \NN$ such that $U_i = C^t_i\cap \mathfrak{U}$. By compactness, as $\mathfrak{U} = \bigcap_{m \in \NN}\mathfrak{U}_m$, there exists $m$ such that for every $\varphi\colon W \to A$ \[ \mathfrak{U}_m\cap \bigcap_{g \in W} g \cdot C^t_{\varphi(g)}   = \varnothing \mbox{ if and only if }  \mathfrak{U} \cap \bigcap_{g \in W} g\cdot C^t_{\varphi(g)}  = \varnothing. \]
    Hence if $\varphi \in P_{m,t}$, we have that $\mathfrak{U}\cap \bigcap_{g \in W} g \cdot C^t_{\varphi(g)}  \neq \varnothing$. Choosing $x \in \mathfrak{U} \cap \bigcap_{g \in W} g\cdot C^t_{\varphi(g)} = \bigcap_{g \in W}g\cdot U_{\varphi(g)}$ we get $f(x)|_W = \varphi$. Furthermore, as $f(x) \in X_{\FF}$, we get that $\varphi = f(x)|_W \notin \FF$ and thus  $P_{m,t} \cap \FF = \varnothing$.
    
    We conclude that this algorithm halts if and only if $X_{\FF}$ is nonempty and thus the domino problem of $G$ is decidable.
    \end{proof}

    \section{Existence of universal wEDS}\label{sec:universal_weds}

    In this section we shall show that every finitely generated group admits a universal wEDS. Intuitively, this universal object is obtained by pasting together all possible directed sequences of effective subshifts in such a way that the resulting object is also an inverse limit. This is achieved by identifying $\NN$ with the nodes of an unrooted universal directed tree with countably many leaves where each node carries an effective subshift and each edge carries a topological factor map between the subshifts at the nodes. In this way, every possible (infinite) sequence of subshifts and factor maps appears as an infinite path in the tree, from which we can extract a factor to any given inverse limit of effective subshifts. We remark that in this proof we do not require the indexation by the tree to be computable, the only computable objects are the subshifts on the nodes and their factor maps.

    For ease of exposition, we will first do the proof in the case of a group with decidable word problem and then argue how to generalize it.
    \begin{theorem}\label{thm:universal_wEDS}
        For every finitely generated group with decidable word problem there exists a universal wEDS $G\curvearrowright \mathfrak{X}$.
    \end{theorem}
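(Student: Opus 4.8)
The plan is to construct $\mathfrak{X}$ directly as an inverse limit $\mathfrak{X}=\lim_{\leftarrow}(\mathfrak{X}_N,\pi_N)$ of effective subshifts in which the bonding maps $\pi_N$ are allowed to carry information, rather than as a product. Some care of this kind is essential, since the obvious guess --- the countable product of all effective subshifts --- is not universal, because an inverse limit need not be a topological factor of the associated product. For instance, already for $G=\mathbb{Z}$, the dyadic odometer $\mathbb{Z}_2=\lim_{\leftarrow}\mathbb{Z}/2^n\mathbb{Z}$ with the translation-by-$1$ action is a wEDS (each $\mathbb{Z}/2^n\mathbb{Z}$ is a single periodic orbit, hence an effective subshift, and the reduction maps are topological factor maps), yet it is not a topological factor of $\prod_n\mathbb{Z}/2^n\mathbb{Z}$: a topological factor map $\prod_n\mathbb{Z}/2^n\mathbb{Z}\to\mathbb{Z}_2$ of $\mathbb{Z}$-systems would, after composing with a translation, yield a continuous section of the quotient of $\prod_n\mathbb{Z}/2^n\mathbb{Z}$ by the closed subgroup $\overline{\langle(1,1,\dots)\rangle}\cong\mathbb{Z}_2$, forcing by Pontryagin duality an embedding of the Prüfer group $\mathbb{Z}(2^{\infty})$ into $\bigoplus_n\mathbb{Z}/2^n\mathbb{Z}$, which is impossible as each element of $\bigoplus_n\mathbb{Z}/2^n\mathbb{Z}$ is divisible by only boundedly many powers of $2$. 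So the bonding maps of $\mathfrak{X}$ must be chosen to leave room for cones over arbitrary inverse systems.

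The only construction needed is the \emph{fibred product} of subshifts. For topological factor maps $\alpha\colon A\to C$ and $\beta\colon B\to C$ of $G$-subshifts over finite alphabets $\Sigma_A,\Sigma_B,\Sigma_C$, the set $A\times_C B=\{(x,y)\in A\times B:\alpha(x)=\beta(y)\}$ is a $G$-subshift over $\Sigma_A\times\Sigma_B$ whose two coordinate projections onto $A$ and onto $B$ are topological factor maps making the square commute; moreover, if $A,B,C$ are effective and the sliding block codes of $\alpha$ and $\beta$ are given, then $A\times_C B$ is effective --- a recursively enumerable set of forbidden patterns is obtained from those of $A$ and of $B$ together with the patterns, finitely many up to translation by $G$, on whose support the two local rules disagree (that these translates are recursively enumerable uses decidability of the word problem). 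Now build $\mathfrak{X}$ recursively. Let $\mathfrak{X}_0$ be the one-point system, and fix an enumeration of all triples $(N,W,q)$ where $N\in\mathbb{N}$, $W$ is an effective subshift, and $q\colon W\to\mathfrak{X}_N$ is a topological factor map (such triples are coded by integers, using the computable structure of $G$; the enumeration itself need not be computable). Having defined $\mathfrak{X}_0,\dots,\mathfrak{X}_{N'-1}$ and the bonding maps, pick the least so-far unserved triple $(N,W,q)$ with $N<N'$ and set $\mathfrak{X}_{N'}:=\mathfrak{X}_{N'-1}\times_{\mathfrak{X}_N}W$ --- the fibred product along the composite bonding map $\pi_{N\to N'-1}\colon\mathfrak{X}_{N'-1}\to\mathfrak{X}_N$ and along $q$ --- let $\pi_{N'-1}\colon\mathfrak{X}_{N'}\to\mathfrak{X}_{N'-1}$ be the projection, and record the topological factor map $\varphi\colon\mathfrak{X}_{N'}\to W$ (the other projection), which satisfies $q\circ\varphi=\pi_{N\to N'}$. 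Then every $\mathfrak{X}_N$ is a nonempty effective subshift and every $\pi_N$ a topological factor map, so $\mathfrak{X}:=\lim_{\leftarrow}(\mathfrak{X}_N,\pi_N)$ is a wEDS; and a routine dovetailing argument shows every triple is eventually served, which gives $\mathfrak{X}$ the following \emph{absorption property}: for every $N$ and every topological factor map $q\colon W\to\mathfrak{X}_N$ out of an effective subshift there are $N'>N$ and a topological factor map $\varphi\colon\mathfrak{X}_{N'}\to W$ with $q\circ\varphi=\pi_{N\to N'}$.

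To establish universality, let $G\curvearrowright X$ be a nonempty wEDS, $X=\lim_{\leftarrow}(X_k,\rho_k)$ with the $X_k$ effective subshifts and the $\rho_k$ topological factor maps, so that every $X_k$ is nonempty. I build recursively indices $N_0<N_1<\cdots$ and topological factor maps $g_k\colon\mathfrak{X}_{N_k}\to X_k$ satisfying $\rho_k\circ g_{k+1}=g_k\circ\pi_{N_k\to N_{k+1}}$: absorb the unique map $X_0\to\mathfrak{X}_0$ to obtain $g_0$; given $g_k$, form the fibred product $\mathfrak{X}_{N_k}\times_{X_k}X_{k+1}$ along $g_k$ and $\rho_k$, absorb its projection onto $\mathfrak{X}_{N_k}$, and compose the absorbing map with the projection onto $X_{k+1}$ to obtain $g_{k+1}$; commutativity of the two pullback squares yields the stated identity. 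Writing $\pi^{(N)}\colon\mathfrak{X}\to\mathfrak{X}_N$ for the canonical projection (which is surjective, since the bonding maps are), the assignment $x\mapsto\bigl(g_k(\pi^{(N_k)}(x))\bigr)_{k\in\mathbb{N}}$ defines a morphism $\mathfrak{X}\to X$, and it is onto: for each $(w_k)_k\in X$ the sets $\{x\in\mathfrak{X}:g_k(\pi^{(N_k)}(x))=w_k\}$ are closed, nonempty (each $g_k\circ\pi^{(N_k)}$ is surjective) and nested, so their intersection is nonempty by compactness. Hence $\mathfrak{X}$ factors onto $X$, so $G\curvearrowright\mathfrak{X}$ is a universal wEDS. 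The main obstacle is the observation of the first paragraph; once the bonding maps are allowed to be pullback projections, the construction and the closing compactness argument are routine, and --- as the definition of wEDS permits --- no uniformity of the sequence $(\mathfrak{X}_N)_N$ is needed, only that each individual $\mathfrak{X}_N$ is an effective subshift.
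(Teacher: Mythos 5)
Your construction is correct, but it takes a genuinely different route from the paper. The paper realizes the universal system inside one large product $\prod_n X_n$ indexed by tuples $(h,s,X,Y,f)$ encoding single arrows of inverse systems, cuts it down by compatibility conditions $(\mathtt{C}_n)$, and exhibits the result as an inverse limit of effective subshifts by projecting onto finite ``descendant-closed'' coordinate sets. You instead build one tower $(\mathfrak{X}_N,\pi_N)$ directly by iterated fibred products and isolate an \emph{absorption property} (every factor map from an effective subshift onto some $\mathfrak{X}_N$ is eventually split by the tower), from which universality follows by the pullback-and-compactness argument --- essentially a projective Fra\"{\i}ss\'{e}-style saturation. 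The price is that your enumeration of triples $(N,W,q)$ refers to the very systems $\mathfrak{X}_N$ being constructed, so it must be built lazily alongside the tower; your dovetailing remark handles this, and the individual steps (effectiveness and surjectivity of the fibred-product projections, nonemptiness, the nested-closed-sets argument) all check out. What the paper's approach buys is a fixed, once-and-for-all index set; what yours buys is a reusable absorption lemma and a cleaner universality argument.

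One correction to your motivating aside: the claim that the dyadic odometer is \emph{not} a factor of $\prod_n\mathbb{Z}/2^n\mathbb{Z}$ is false. Pontryagin duality rules out a \emph{homomorphic} splitting of $0\to H\to\prod_n\mathbb{Z}/2^n\mathbb{Z}\to Q\to 0$ with $H=\overline{\langle(1,1,\dots)\rangle}$, but a merely \emph{continuous} section $s\colon Q\to\prod_n\mathbb{Z}/2^n\mathbb{Z}$ of the quotient map $\pi$ always exists (quotient maps of profinite groups are open surjections between zero-dimensional compact metric spaces, so a continuous selection of fibres exists), and such a section yields an equivariant factor map $x\mapsto\psi\bigl(x-s(\pi(x))\bigr)$ onto $\mathbb{Z}_2$, where $\psi\colon H\to\mathbb{Z}_2$ is the canonical isomorphism: it is well defined since $x-s(\pi(x))\in H$, continuous, surjective on each fibre of $\pi$, and sends $x+(1,1,\dots)$ to $\psi(x-s(\pi(x)))+1$. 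So the obstruction in your first paragraph is illusory. This does not affect your proof, which never uses it; the honest motivation is only that a bare product exhibits no visible factor map onto a general inverse limit, not that one provably cannot exist.
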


    \begin{proof}
        Consider the collection of all tuples of the form $(h,s,X,Y,f)$ where $h$ and $s$ are non-negative integers, $X$ and $Y$ are nonempty effective subshifts on $G$ whose alphabets are finite subsets of $\NN$ and $f$ is a topological factor map between $X$ and $Y$.

        There are at most countably many effective subshifts (with their alphabets identified with finite subsets of $\NN$) for any given group $G$. Moreover, by the Curtis-Hedlund-Lyndon theorem, there can be at most countably many topological morphisms between $X$ and $Y$ and thus at most countably many topological factor maps. Therefore the space of all such tuples is countable.
        
        Notice that this set of tuples is nonempty as the trivial subshift with a single constant configuration is always effective and the identity map is an automorphism. Thus, the set of all such tuples is countably infinite. Let $\varphi$ be a bijection from $\NN$ to the space of all such tuples. Given $n \in \NN$, write $\varphi(n) = (h_n,s_n,X_n,Y_n,f_n)$.
        Consider the space \[ \mathfrak{X}' = \prod_{n \in \NN} X_n, \]
        It is clear that $\mathfrak{X}'$ can be identified with a compact subset of the Baire space $\NN^{\NN}$, and furthermore, that $G$ acts naturally on $\mathfrak{X}'$ through the coordinate-wise shift action.

        Let $\mathfrak{X}\subset \mathfrak{X}'$ be the set of all $(x_n)_{n \in \NN}\in \mathfrak{X}'$ which satisfy for every $n 
        \in \NN$ condition $(\mathtt{C}_n)$: \begin{equation}
            \mbox{ if } Y_n = X_{s_n} \mbox{ and } h_n = h_{s_n}+1, \mbox{ then } x_{s_n} = f_n(x_n).\tag{$\mathtt{C}_n$ }
        \end{equation}

        Let us introduce some notation that will be useful in the remainder of the proof. For $n \in \NN$, we write $D_0(n) = n$ and define iteratively for $k \geq 1$, $D_k(n)= s_{D_{k-1}(n)}$. We call $D_k(n)$ the $k$-th descendant of $n$. We also define the height $H(n)$ as the largest value of $k$ such that for $0 \leq i < k$ we have $h_{D_i(n)} = h_{D_{i+1}(n)}+1$. We remark that, by vacuity, if $h_n=0$ then $H(n)=0$ and also that $H(n) \leq h_n$.
        
        For $k \in \NN$, consider the set of integers \[ A_k = \{ D_i(n) : n \in \{0,\dots,k\}, i \in \{0,\dots,H(n)\} \}.    \]
        That is, we take all non-negative integers $n \leq k$ and define $A_k$ to be the set of all their descendants up to their height. Notice that each set $A_k$ is finite, and that $(A_k)_{k \in \NN}$ forms a monotone increasing sequence of sets with $\bigcup_{k \in \NN}A_k = \NN$.

        Let us argue that $\mathfrak{X}$ is a wEDS. Consider the projection map $\pi_k \colon \mathfrak{X} \to \prod_{n\in A_k} X_n$ given by $\pi_k((x_n)_{n\in \NN}) = (x_n)_{n\in A_k}$ and let $\mathfrak{X}_k = \pi_k(\mathfrak{X})$. It is clear that $\mathfrak{X}_k$ is a subshift as it is a closed $G$-invariant subset of $\prod_{n\in A_k} X_k$. Furthermore, if we define for $i \in \NN$ the natural projection map $g_{i}\colon \mathfrak{X}_{i+1} \to \mathfrak{X}_i$, then $\mathfrak{X}$ is the inverse limit of the sequence $(\mathfrak{X}_k)_{k \in \NN}$ endowed with the collection of factor maps $g_{i}$. Therefore, in order to show that $\mathfrak{X}$ is a wEDS, we only need to argue that for every $k \geq 0$, the subshift $\mathfrak{X}_k$ is effective. 
        Indeed, let $k \geq 0$ and note first that $\prod_{n\in A_k}X_k$ is a nonempty product of effective subshifts and thus a nonempty effective subshift itself. Furthermore, by our definition of $A_k$, it follows that $\mathfrak{X}_k$ is precisely the set of elements of $\prod_{n\in A_k}X_k$ which satisfy conditions $\mathtt{C}_n$ for each $n \in A_k$. Noting that each of these conditions can be implemented with finitely many forbidden patterns (as a consequence of the Curtis-Hedlund-Lyndon theorem), we conclude that $\mathfrak{X}_k$ is effective. 

        Finally, let us show that $G\curvearrowright \mathfrak{X}$ is universal. Let $G\curvearrowright X$ be a wEDS, then it is topologically conjugate to the inverse limit of a sequence of effective subshifts. Let $(Y_i)_{i \in \NN}$ be said sequence of effective subshifts and for $i \in \NN$ let $t_{i}\colon Y_{i+1}\to Y_{i}$ be the associated factor maps.

        Construct iteratively a sequence $(n_i)_{i \in \NN}$ such that $\varphi(n_0) = (0,0,Y_0,Y_0,\operatorname{Id}_{Y_0})$ and such that for $i \geq 1$ we have $\varphi(n_i) = (i, n_{i-1},Y_i,Y_{i-1}, t_{i-1})$. Consider the map $\xi\colon \mathfrak{X}\to \prod_{i \in \NN} Y_i$ given by \[ \xi( (x_n)_{n \in \NN}) = (x_{n_i})_{i \in \NN}.   \]
        It is obvious that $\xi$ is continuous and $G$-equivariant. Moreover, for each $i \in \NN$ we have $x_{n_i}\in X_{n_i} = Y_i$. Furthermore, by condition $(\mathtt{C}_{n_i})$ we have that for $i \geq 1$, $t_i(x_{n_i})=f_{n_i}(x_{n_i})=x_{n_{i-1}}$, hence the image of any point in $\mathfrak{X}$ lies in the inverse limit of the $Y_i$. It remains to show that $\xi$ is surjective.

        Let $y = (y_i)_{i \in \NN}$ such that for $i \in \NN$ we have $t_i(y_{i+1})=y_{i}$. We will construct a sequence $x=(x_n)_{n \in \NN}$ such that $x \in \mathfrak{X}$ and $\xi(x)=y$. To that end, we first set $x_{n_i}=y_i$ for each $i \in \NN$. Then iteratively for each $n \in \NN$ we do the following: we compute the largest value (if it exists) of $i \in \{0,\dots,H(n)\}$ for which $x_{D_i(n)}$ is not yet defined.
        \begin{enumerate}
            \item If $x_{D_i(n)}$ is defined for all such $i$, we do nothing and move on to $n+1$.
            \item If $i < H(n)$, we choose $x_{D_i(n)} \in X_{D_i(n)}$ such that $f_{D_i(n)}(x_{D_i(n)}) = x_{D_{i+1}(n)}$ and repeat the process with $i-1$ until all the values $x_{D_i(n)}$ are defined.
            \item If $i = H(n)$, then we choose $x_{D_{H(n)}(n)} \in X_{D_{H(n)}(n)}$ arbitrarily and then proceed as above.
        \end{enumerate} 

        Notice that at any step during this process, if $x$ is defined at a coordinate $n$, then it is also defined in all descendants of $n$ up to $H(n)$. Furthermore, all coordinates which are defined at a given step satisfy condition $\mathtt{C}_n$, and every coordinate is eventually defined. It follows that $x\in \mathfrak{X}$ and clearly $\xi(x)=y$.\end{proof}

        Let us now consider the situation of a group $G$ which does not have decidable word problem.  In this general setting, we may no longer characterize effective subshifts through forbidden patterns on $G$, but we may identify them in such a way lifting the action of $G$ to a free group~\cite[Proposition 7.1]{BarCarRoj2025}. More precisely, if we let $S\subset G$ be a finite set of generators and consider the free group $F(S)$ generated by $S$ and the canonical epimorphism $\phi \colon F(S) \to G$, then a subshift $X\subset A^G$ is effective if and only if its pullback $\widehat{X}\subset A^{F(S)}$ is effective, where \[ \widehat{X} = \{ y \in A^{F(S)} : \mbox{ there exists } x \in X, y(w) = x(\phi(w)) \mbox{ for all } w \in F_2, \}.     \]
        
        Thus in the proof of~\Cref{thm:universal_wEDS} we may consider instead of $G$ the group $F(S)$ and instead of effective subshifts $X_n$ and $Y_n$ on the group $G$, we replace them by their pullbacks $\widehat{X}_n$ and $\widehat{Y}_n$ which are subshifts on $F(S)$. The rest of the proof is exactly the same and at the end we note that the kernel of the action $F(S) \curvearrowright \mathfrak{X}$ contains $\operatorname{ker}(\phi)$ and thus the resulting induced action of $G$ is the universal $G$-wEDS. We thus obtain~\Cref{mainthm:universalweds}.

\section{Final remarks}\label{sec:conclusion}

\Cref{mainthm:nonexistence_factor_to_weds} shows that on finitely generated groups with decidable word problem but undecidable domino problem, the class of wEDS does not characterize the class of topological factors of EDS. Two questions arise naturally.

\begin{question}
    Let $G$ be a group with decidable word problem and undecidable domino problem. Is there an additional invariant of recursive nature that characterizes which wEDS are topological factors of EDS?
\end{question}

We remark that if an inverse limit of effective subshifts is uniform (that is, there is a unique algorithm which describes all subshifts in the sequence and factor maps) then it is an EDS, and all zero-dimensional EDS can be represented in this manner~\cite[Section 4]{BarCarRoj2025}. In particular, any subsequence of such a uniform sequence gives rise to a wEDS factor and thus all topological factors of EDS arise in this way. However, we do not see any natural way to state this restriction in purely computable terms.

The second question is whether the class of wEDS is the right characterization for topological factors of EDS on groups in which both the word problem and the domino problem are decidable, such as $\ZZ$.

\begin{question}\label{ques:2}
    Let $G$ be a group with decidable word problem and decidable domino problem. Is every wEDS on $G$ a topological factor of some EDS? 
\end{question}

Naturally, if the answer to~\Cref{ques:2} is positive, then a universal wEDS would admit a universal EDS for said group as an extension. This hints that if the answer is positive, it should be possible to ``effectivize'' the construction in the proof of~\Cref{mainthm:universalweds} using elements from the proof of~\Cref{prop:decid_DP}. We were not able to do this, the main difficulty with such an approach is that, at least in principle, one can no longer enumerate all topological factor maps but merely all topological morphisms. 

Finally, we will argue that in Theorems \ref{mainthm:nonexistence_factor_to_weds},~\ref{mainthm:hochmanv2} and~\ref{mainthm:universalweds} we may in fact drop the hypothesis that the group is finitely generated and extend it to countable groups, but we must pay the price that now the underlying notions of EDS, word problem and domino problem will depend upon the choice of generating set. 

Let us be more precise. Let $
\mathcal{S}= (g_n)_{n \geq 0}$ be a sequence of elements of $G$ which generates it. We can define the word problem $\texttt{WP}_{\mathcal{S}}(G)$ as the set of $w_0\dots w_k \in \NN^*$ such that $g_{w_0}\cdots g_{w_k}$ represents the identity in $G$. We note that unlike finitely generated groups, in general a countable group decidability of the word problem depends heavily on the choice of $\mathcal{S}$, see for instance examples 5.3 and 5.4 of~\cite{barbierilemp:tel-01563302}. Similarly, an action $G\curvearrowright X$ is uniformly computable, if the collection of maps $(g_n\colon X \to X)_{n \geq 0}$ is uniformly computable, that is, there is an algorithm which on input $n \in \NN$ outputs a description of $g_n$. Again, this depends upon the chosen enumeration of $\mathcal{S}$, thus, by modding out topological conjugacy, we may only speak about an effective dynamical system of $G$ with respect to $\mathcal{S}$. Similarly, we can use $\mathcal{S}$ to encode patterns and speak about the domino problem of a group with respect to $\mathcal{S}$. 

The proofs of Theorems \ref{mainthm:nonexistence_factor_to_weds},~\ref{mainthm:hochmanv2} and~\ref{mainthm:universalweds} can naturally be extended to this setting with minimal modifications, as we did not use finite generation in any meaningful way besides the fact that if a finite set of generators acts by computable maps, then the action of $G$ is uniformly computable.

\bibliographystyle{plainurl}
\bibliography{bibliography}

\bigskip
\Addresses

\end{document}